\numberwithin{equation}{section}
\theoremstyle{plain}	     
\newtheorem{thm}{Theorem}[section] 
\newtheorem{lem}[thm]{Lemma}
\theoremstyle{definition}
\theoremstyle{remark}
\newcommand{\vp}{\varphi}
\newcommand{\sn}{\operatorname{sn}}
\newcommand{\cn}{\operatorname{cn}}
\newcommand{\dn}{\operatorname{dn}}
\begin{document}

\title{Some double-angle formulas related to a generalized lemniscate function
\footnote{The work was supported by JSPS KAKENHI Grant Number 17K05336.}}
\author{Shingo Takeuchi \\
Department of Mathematical Sciences\\
Shibaura Institute of Technology
\thanks{307 Fukasaku, Minuma-ku,
Saitama-shi, Saitama 337-8570, Japan. \endgraf
{\it E-mail address\/}: shingo@shibaura-it.ac.jp \endgraf
{\it 2010 Mathematics Subject Classification.} 
33E05, 34L40}}


\maketitle

\begin{abstract}
In this paper we will establish some double-angle formulas
related to the inverse function of $\int_0^x dt/\sqrt{1-t^6}$.
This function appears in Ramanujan's Notebooks
and is regarded as a generalized version of 
the lemniscate function.

\end{abstract}

\textbf{Keywords:} 
Generalized trigonometric functions,
Double-angle formulas,
Lemniscate function,
Jacobian elliptic functions,
$p$-Laplacian.


\section{Introduction}

Let $1<p,\ q<\infty$ and 
$$F_{p,q}(x):=\int_0^x \frac{dt}{(1-t^q)^{1/p}}, \quad x \in [0,1].$$
We will denote by $\sin_{p,q}$ the inverse function of $F_{p,q}$, i.e.,
$$\sin_{p,q}{x}:=F_{p,q}^{-1}(x).$$
Clealy, $\sin_{p,q}{x}$ is an increasing function in $[0,\pi_{p,q}/2]$ to $[0,1]$,
where
$$\pi_{p,q}:=2F_{p,q}(1)=2\int_0^1 \frac{dt}{(1-t^q)^{1/p}}.$$
We extend $\sin_{p,q}{x}$ to $(\pi_{p,q}/2,\pi_{p,q}]$ by $\sin_{p,q}{(\pi_{p,q}-x)}$
and to the whole real line $\mathbb{R}$ as the odd $2\pi_{p,q}$-periodic 
continuation of the function. 
Since $\sin_{p,q}{x} \in C^1(\mathbb{R})$,
we also define $\cos_{p,q}{x}$ by $\cos_{p,q}{x}:=(d/dx)(\sin_{p,q}{x})$.
Then, it follows that 
$$|\cos_{p,q}{x}|^p+|\sin_{p,q}{x}|^q=1.$$
In case $(p,q)=(2,2)$, it is obvious that $\sin_{p,q}{x},\ \cos_{p,q}{x}$ 
and $\pi_{p,q}$ are reduced to the ordinary $\sin{x},\ \cos{x}$ and $\pi$,
respectively. 
This is a reason why these functions and the constant are called
\textit{generalized trigonometric functions} (with parameter $(p,q)$)
and the \textit{generalized $\pi$}, respectively. 

The generalized trigonometric functions are well studied in the context of 
nonlinear differential equations (see \cite{KT2019} and the references given there). 
Suppose that $u$ is a solution of 
the initial value problem of $p$-Laplacian
$$-(|u'|^{p-2}u')'=\frac{(p-1)q}{p} |u|^{q-2}u, \quad u(0)=0,\ u'(0)=1,$$
which is reduced to the equation $-u''=u$ of simple harmonic motion for
$u=\sin{x}$ in case $(p,q)=(2,2)$.  
Then, 
$$\frac{d}{dx}(|u'|^p+|u|^q)=\left(\frac{p}{p-1}(|u'|^{p-2}u')'+q|u|^{q-2}u\right)u'=0.$$
Therefore, $|u'|^p+|u|^q=1$, hence it is reasonable to define $u$ as 
a generalized sine function and $u'$ as a generalized cosine function.
Indeed, it is possible to show that $u$ coincides with $\sin_{p,q}$ defined as above.
The generalized trigonometric functions are often applied to
the eigenvalue problem of $p$-Laplacian.

Now, we are interested in finding double-angle formulas for 
generalized trigonometric functions.
It is possible to discuss addition formulas for these functions, 
but for simplicity we will not develop this point here.

We have known the double-angle formulas 
of $\sin_{2,q},\ \sin_{q^*,q}$ and $\sin_{q^*,2}$
for $q=2,3,4$ except for $\sin_{3/2,2}$, where $q^*:=q/(q-1)$
 (Table \ref{tab:(p,q)}). For the detail for each formula,
we refer the reader to \cite{ST2020}
(After having proved the formula for $\sin_{2,3}$ 
in the co-authored paper \cite{ST2020}, the author noticed that 
the formula has already been obtained as $\vp(2s)$ by Cox and Shurman
\cite[p.697]{CS2005}).
It is worth pointing out that Lemma \ref{lem:maf} (resp. Lemma \ref{lem:duality}) 
below connects the parameter $(2,q)$ to $(q^*,q)$ (resp. $(q^*,2)$) 
and yields the possibility 
to obtain the other formula from one formula.
Indeed, in this way, the formulas of $\sin_{4/3,4}$ and $\sin_{4/3,2}$ 
follow from that of $\sin_{2,4}$ 
(\cite[Subsection 3.1]{T2016b} and \cite[Theorem 1.1]{ST2020}, respectively),
and the formula of $\sin_{2,3}$ follows from that of $\sin_{3/2,3}$ (\cite[Theorem 1.2]{ST2020}).
Nevertheless, the parameter $(3/2,2)$ is still open
because of difficulty of the inverse problem corresponding to \eqref{eq:f(2x)}.

In this paper, we wish to investigate the double-angle formula 
of the function $\sin_{2,6}{x}$, whose inverse function is defined as
$$\sin_{2,6}^{-1}{x}=\int_0^x \frac{dt}{\sqrt{1-t^6}}.$$
The function $\sin_{2,6}{x}$ appears as the inverse of $H(v)$ 
in Ramanujan's Notebooks \cite[p.246]{Be1994}
and is regarded as a generalized version 
of the lemniscate function $\sin_{2,4}{x}$. 
For the function,
Shinohara \cite{Sh2017} gives the novel double-angle formula  
\begin{equation}
\label{eq:(p,q)=(2,6)}
\sin_{2,6}{(2x)}=\frac{2\sin_{2,6}{x}\cos_{2,6}{x}}{\sqrt{1+8\sin_{2,6}^6{x}}},
\quad x \in [0,\pi_{2,6}/2].
\end{equation}
In fact, he found \eqref{eq:(p,q)=(2,6)} in ``trial and error calculations'' 
(according to private communication), but instead we will 
make a heuristic proof of \eqref{eq:(p,q)=(2,6)} in Section \ref{sec:(2,6)}.
Moreover, as mentioned above, we can show the following
counterparts of \eqref{eq:(p,q)=(2,6)} for $\sin_{6/5,6}$ and $\sin_{6/5,2}$,
respectively.

\begin{thm}
\label{thm:(p,q)=(6/5,6)}
Let $(p,q)=(6/5,6)$. Then, for $x \in [0,\pi_{6/5,6}/4]$,
\begin{equation*}
\sin_{6/5,6}{(2x)}
=\frac{2^{1/6}\sin_{6/5,6}{x}\cos_{6/5,6}^{1/5}{x}\left(3+\sqrt{1+32\sin_{6/5,6}^6{x}
\cos_{6/5,6}^{6/5}{x}}\right)^{1/2}}{\left(1+32\sin_{6/5,6}^6{x}\cos_{6/5,6}^{6/5}{x}\right)^{1/4}
\left(1+\sqrt{1+32\sin_{6/5,6}^6{x}\cos_{6/5,6}^{6/5}{x}}\right)^{1/6}}.
\end{equation*}
\end{thm}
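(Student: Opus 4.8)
The plan is to deduce the formula from the known double-angle formula \eqref{eq:(p,q)=(2,6)} for $\sin_{2,6}$ by means of Lemma \ref{lem:maf}, which links the parameters $(2,6)$ and $(6/5,6)$. Concretely, I would first record the pointwise identity
\begin{equation*}
\sin_{2,6}{(4^{1/6}s)}=4^{1/6}\sin_{6/5,6}{s}\,\cos_{6/5,6}^{1/5}{s},
\qquad s\in[0,\pi_{6/5,6}/2],
\end{equation*}
which I would obtain by writing $F_{2,6}$ and $F_{6/5,6}$ as incomplete Beta integrals (via $u=t^6$) and applying the quadratic transformation $B(z;\tfrac16,\tfrac16)=4^{-1/6}B(4z(1-z);\tfrac16,\tfrac12)$ coming from the substitution $w=4u(1-u)$. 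Because $\sin_{2,6}$ is symmetric about $\pi_{2,6}/2$, this identity remains valid on the whole interval $s\in[0,\pi_{6/5,6}/2]$, and the endpoint $s=\pi_{6/5,6}/2$ corresponds to $4^{1/6}s=\pi_{2,6}$; this is what ultimately dictates the stated range of $x$.

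Next I would set $\xi:=4^{1/6}x$ and evaluate $\sin_{2,6}{(2\xi)}=\sin_{2,6}{(4^{1/6}(2x))}$ in two ways. On one hand, the above identity at $s=2x$ gives $\sin_{2,6}{(2\xi)}=4^{1/6}\sin_{6/5,6}{(2x)}\cos_{6/5,6}^{1/5}{(2x)}$. On the other hand, \eqref{eq:(p,q)=(2,6)} applied at $\xi$, combined with the identity at $s=x$ (so that $\sin_{2,6}^6{\xi}=4\sin_{6/5,6}^6{x}\cos_{6/5,6}^{6/5}{x}$ and $\cos_{2,6}{\xi}=\sqrt{1-\sin_{2,6}^6{\xi}}$), re-expresses the same quantity through the data at $x$. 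Abbreviating $P:=\sin_{6/5,6}^6{x}\cos_{6/5,6}^{6/5}{x}$, equating the two evaluations and cancelling $4^{1/6}$ yields
\begin{equation*}
\sin_{6/5,6}{(2x)}\cos_{6/5,6}^{1/5}{(2x)}
=\frac{2\sin_{6/5,6}{x}\cos_{6/5,6}^{1/5}{x}\,\sqrt{1-4P}}{\sqrt{1+32P}}.
\end{equation*}

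Finally, I would eliminate the cosine on the left. Writing $A:=\sin_{6/5,6}{(2x)}$ and using the Pythagorean relation $\cos_{6/5,6}^{6/5}{(2x)}=1-A^6$, the left-hand side becomes $A(1-A^6)^{1/6}$; raising the identity to the sixth power gives $W(1-W)=R^6$ with $W:=A^6$ and $R^6=64P(1-4P)^3/(1+32P)^3$. Solving this quadratic in $W$ and taking the sixth root produces the closed form. The main obstacle is the ensuing simplification: one must recognize the perfect square
\begin{equation*}
(1+32P)^3-256P(1-4P)^3=(1-80P-128P^2)^2,
\end{equation*}
so that the discriminant $\sqrt{1-4R^6}$ becomes rational in $\beta:=\sqrt{1+32P}$, and then use the factorizations $\beta^4+8\beta^3+18\beta^2-27=(\beta-1)(\beta+3)^3$ and $-\beta^4+8\beta^3-18\beta^2+27=(\beta+1)(3-\beta)^3$ to collapse the root into $W=2P(3+\beta)^3/(\beta^3(1+\beta))$, which is exactly the sixth power of the asserted right-hand side. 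A companion subtlety is the branch choice for the quadratic: since $W$ increases continuously from $0$ at $x=0$ to $1$ at $x=\pi_{6/5,6}/4$ (where $\beta=3$), crossing $W=\tfrac12$ precisely when $1-80P-128P^2$ vanishes, the two sign regimes splice together into the single expression above, and checking $W(1-W)=R^6$ directly for this $W$ gives a clean verification once the factorizations are in hand.
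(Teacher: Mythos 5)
Your proposal is correct and follows essentially the same route as the paper: the identity $\sin_{2,6}(4^{1/6}s)=4^{1/6}\sin_{6/5,6}s\,\cos_{6/5,6}^{1/5}s$ is exactly \eqref{eq:mafs} of Lemma \ref{lem:maf} for $q=6$ (which the paper cites from \cite{T2016b} rather than rederiving via the Beta-integral quadratic transformation), and the rest --- combining it with \eqref{eq:(p,q)=(2,6)}, solving the quadratic for $W=\sin_{6/5,6}^6(2x)$, recognizing the perfect-square discriminant $(1-80P-128P^2)^2$ (the paper's $(1-20S^6-8S^{12})^2$ with $S^6=4P$), factoring out $(\beta-1)(\beta+3)^3$, and splicing the two sign regimes at $W=1/2$ --- is the paper's argument in a slightly reordered form.
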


\begin{thm}
\label{thm:(p,q)=(6/5,2)}
Let $(p,q)=(6/5,2)$. Then, for $x \in [0,\pi_{6/5,2}/2]$,
\begin{equation*}
\sin_{6/5,2}{(2x)}
=\sqrt{1-\left(\frac{9-8\sin_{6/5,2}^2{x}-4\sin_{6/5,2}^2{x}\cos_{6/5,2}^{2/5}{x}}
{9-8\sin_{6/5,2}^2{x}+8\sin_{6/5,2}^2{x}\cos_{6/5,2}^{2/5}{x}}\right)^3}.
\end{equation*}
\end{thm}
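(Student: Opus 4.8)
The plan is to transport the $(2,6)$ double-angle formula \eqref{eq:(p,q)=(2,6)} to the parameter $(6/5,2)$ through the duality Lemma \ref{lem:duality}, just as Theorem \ref{thm:(p,q)=(6/5,6)} is read off from Lemma \ref{lem:maf}. Write $S=\sin_{6/5,2}x$ and $C=\cos_{6/5,2}x$. Lemma \ref{lem:duality} identifies the $(6/5,2)$-argument $x$ with a $(2,6)$-argument $\theta$; normalizing the affine relation (the duality carries the scaling $q/2=3$ together with a reflection) gives $\theta=\pi_{2,6}/2-x/3$ together with
\[
S=\cos_{2,6}\theta,\qquad C=\sin_{2,6}^{5}\theta ,
\]
so that $S^{2}=1-\sin_{2,6}^{6}\theta$ and $C^{2/5}=\sin_{2,6}^{2}\theta$. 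Under the same dictionary $2x$ corresponds to $2\theta-\pi_{2,6}/2$, whence
\[
\sin_{6/5,2}(2x)=\cos_{2,6}\!\left(2\theta-\tfrac{\pi_{2,6}}{2}\right)=\sqrt{\,1-\sin_{2,6}^{6}\!\left(2\theta-\tfrac{\pi_{2,6}}{2}\right)\,}.
\]

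Substituting $S=\cos_{2,6}\theta$ and $C=\sin_{2,6}^{5}\theta$ into the fraction in the statement and abbreviating $a=\sin_{2,6}\theta$, the numerator and denominator collapse to polynomials in $a$, and the theorem becomes the single $(2,6)$-identity
\[
\sin_{2,6}^{2}\!\left(2\theta-\tfrac{\pi_{2,6}}{2}\right)=\frac{4a^{8}+8a^{6}-4a^{2}+1}{-8a^{8}+8a^{6}+8a^{2}+1},
\]
henceforth $(\dagger)$. I would first test $(\dagger)$ at $\theta=0,\ \pi_{2,6}/4,\ \pi_{2,6}/2$; notably its numerator vanishes exactly at $a=\sin_{2,6}(\pi_{2,6}/4)$, as forced by putting $\theta=\pi_{2,6}/4$ in \eqref{eq:(p,q)=(2,6)}, which gives $1+8a^{6}=4a^{2}(1-a^{6})$.

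To prove $(\dagger)$ I would use the squared form of \eqref{eq:(p,q)=(2,6)}, namely $\sin_{2,6}^{2}(2w)=g(\sin_{2,6}^{2}w)$ with $g(t)=4t(1-t^{3})/(1+8t^{3})$, which holds for all $w$ by the periodicity and reflection of $\sin_{2,6}$. Applying it at $w$ and at $\pi_{2,6}/2-w$ and using $\sin_{2,6}(\pi_{2,6}-\cdot)=\sin_{2,6}(\cdot)$ yields the complement relation $g(\sin_{2,6}^{2}w)=g(\sin_{2,6}^{2}(\pi_{2,6}/2-w))$. With $w=2\theta$, and writing $A^{2}=\sin_{2,6}^{2}(2\theta)=g(a^{2})$ and $a_{2}^{2}=\sin_{2,6}^{2}(2\theta-\pi_{2,6}/2)$, this reads $g(a_{2}^{2})=g(A^{2})$, so $a_{2}^{2}$ and $A^{2}$ are the two roots in $[0,1]$ of $g(z)=g(A^{2})$. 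Since for $z\ne\zeta$ one has $g(z)=g(\zeta)$ iff the symmetric cofactor $1-8z\zeta(z+\zeta)-(z+\zeta)(z^{2}+\zeta^{2})-8z^{3}\zeta^{3}$ vanishes, I would verify this cofactor identity with $z=g(a^{2})$ and $\zeta$ equal to the claimed fraction; it reduces to a polynomial identity in $a^{2}$. The checkpoint $\theta=\pi_{2,6}/4$ (where the fraction is $0$ while $A^{2}=1$), together with continuity on $[0,\pi_{6/5,2}/2]$, then identifies the fraction with $a_{2}^{2}$ and not with $A^{2}$. Finally $\sin_{6/5,2}(2x)=\sqrt{1-a_{2}^{6}}\ge0$ fixes the sign of the square root.

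The main obstacle is exactly this selection of the correct root in closed form. Because the duality scales the argument by $q/2=3$ and reflects, doubling in $(6/5,2)$ is a double-and-shift in $(2,6)$, so $a_{2}^{2}$ must be recovered as the companion root of the double-angle map rather than by a naive substitution — an inverse problem. For $q=6$ this companion root is the rational function displayed in $(\dagger)$, so the proof collapses to an elementary if lengthy polynomial verification; the same inversion is not solvable in closed form for $(3/2,2)$, which is why that parameter remains open. The delicate points are therefore the uniform branch choice on $[0,\pi_{6/5,2}/2]$ and the attendant sign bookkeeping, rather than any single hard estimate.
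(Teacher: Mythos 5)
Your argument is correct, and after the shared opening move it follows a genuinely different route from the paper's. Both proofs begin by applying Lemma \ref{lem:duality} to get $\sin_{6/5,2}(2x)=\cos_{2,6}(2\theta-\pi_{2,6}/2)$ with $\theta=\pi_{2,6}/2-x/3$ (the paper writes the argument as $\pi_{2,6}/2-2x/3$, the same point), and your dictionary $S=\cos_{2,6}\theta$, $C=\sin_{2,6}^5\theta$ is exactly right. From there the paper is \emph{constructive}: it invokes the Jacobian addition formula \eqref{eq:addition26} at $u=\pi_{2,6}/2$ to obtain the complement identity $\sin_{2,6}(\pi_{2,6}/2-v)=\sqrt{(1-V^2)/(1+2V^2)}$, hence the functional equation \eqref{eq:f(2x)}; it then inverts that relation in \eqref{eq:+}, pushes it through \eqref{eq:(p,q)=(2,6)} to solve the inverse problem explicitly as \eqref{eq:g(x)}, and substitutes back, so the final formula is derived rather than verified. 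You instead compress the whole theorem into the single identity $(\dagger)$ and verify it using only \eqref{eq:(p,q)=(2,6)} in squared form plus the reflection symmetry of $\sin_{2,6}$: the claimed fraction and $\sin_{2,6}^2(2\theta)$ are the two preimages in $[0,1]$ of $\sin_{2,6}^2(4\theta)$ under the unimodal map $g$, the cofactor factorization $g(z)-g(\zeta)=(z-\zeta)\cdot(\text{cofactor})/[(1+8z^3)(1+8\zeta^3)]$ reduces membership to a polynomial identity, and the checkpoints $\theta=0,\ \pi_{2,6}/4,\ \pi_{2,6}/2$ together with connectedness between the coincidence points $\theta=\pi_{2,6}/8,\ 3\pi_{2,6}/8$ pin down the correct branch. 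Your reduction of the numerator and denominator to $4a^8+8a^6-4a^2+1$ and $-8a^8+8a^6+8a^2+1$, your cofactor, and the observation that the numerator's root at $\theta=\pi_{2,6}/4$ is forced by \eqref{eq:(p,q)=(2,6)} all check out, and the one step you defer --- verifying the cofactor identity at $z=g(a^2)$, $\zeta$ the fraction --- is a finite polynomial computation that does succeed (it already vanishes through order $a^4$ on expansion, and must vanish identically). The trade-off: the paper's route yields the explicit inversion \eqref{eq:g(x)}, which is of independent interest and explains why $(3/2,2)$ resists the same treatment, while yours eliminates the elliptic-function addition formula entirely at the cost of a guessed answer and a branch-selection argument. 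Carry out the polynomial verification explicitly before regarding the proof as complete.
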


\begin{table}[h]
\begin{center}
\begin{tabular}{|c|l|l|l|}
\noalign{\hrule height0.8pt}
$q$ & $(q^*,2)$ & $(2,q)$ & $(q^*,q)$ \\
\hline
$2$ & $(2,2)$ by Abu al-Wafa' & $(2,2)$ by Abu al-Wafa' & $(2,2)$ by Abu al-Wafa' \\
$3$ & $(3/2,2)$ open & $(2,3)$ by Cox-Shurman & $(3/2,3)$ by Dixon \\
$4$ & $(4/3,2)$ by Sato-Takeuchi & $(2,4)$ by Fagnano & $(4/3,4)$ by Edmunds et al. \\
$6$ & $(6/5,2)$ \textbf{Theorem \ref{thm:(p,q)=(6/5,2)}} & $(2,6)$ by Shinohara & $(6/5,6)$ \textbf{Theorem \ref{thm:(p,q)=(6/5,6)}} \\
\noalign{\hrule height0.8pt}
\end{tabular}
\end{center}
\caption{The parameters for which the double-angle formulas have been obtained.}
\label{tab:(p,q)}
\end{table}


\section{Proof of \eqref{eq:(p,q)=(2,6)}}
\label{sec:(2,6)}

The change of variable $s=t^2$ leads to the representation
$$\sin_{2,6}^{-1}{x}=\frac{1}{2}\int_0^{x^2}\frac{ds}{\sqrt{s(1-s^3)}},\quad 0 \leq x \leq 1.$$
The furthermore change of valiable
$$\cn{u}=\frac{1-(\sqrt{3}+1)s}{1+(\sqrt{3}-1)s}, \quad k^2=\frac{2-\sqrt{3}}{4}$$
gives
\begin{align*}
\sin_{2,6}^{-1}{x}
&=\frac{1}{2}\int_0^{\cn^{-1}{\phi(x)}}
\frac{((\sqrt{3}+1)+(\sqrt{3}-1)\cn{u})^2}{2\cdot 3^{3/4}\sn{u}\dn{u}}
\cdot \frac{2\sqrt{3}\sn{u}\dn{u}}{((\sqrt{3}+1)+(\sqrt{3}-1)\cn{u})^2}\,du\\
&=\frac{1}{2\cdot 3^{1/4}}\int_0^{\cn^{-1}{\phi(x)}}\,du\\
&=\frac{1}{2\cdot 3^{1/4}} \cn^{-1}{\phi(x)},
\end{align*}
where $\sn{u}=\sn{(u,k)},\ \cn{u}=\cn{(u,k)}$ and $\dn{u}=\dn{(u,k)}$ are the
Jacobian elliptic functions (see e.g. \cite[Chapter XXII]{WW1927} for more details), and
$$\phi(x)=\frac{1-(\sqrt{3}+1)x^2}{1+(\sqrt{3}-1)x^2}.$$
Thus,
$$\sin_{2,6}{u}=\phi^{-1}{(\cn{(2\cdot 3^{1/4}u)})}, \quad 0 \leq u \leq \pi_{2,6}/2=K/(3^{1/4}),$$
where $K=K(k)$ is the complete elliptic integral of the first kind and
$$\phi^{-1}(x)=\sqrt{\frac{1-x}{(\sqrt{3}+1)+(\sqrt{3}-1)x}}.$$

Now, we use the addition formula of $\cn$. 
For $u,\ v,\ u \pm v \in [0,K/(3^{1/4})]$,
\begin{align*}
\sin_{2,6}{(u \pm v)}
&=\phi^{-1}{(\cn{(\tilde{u} \pm \tilde{v})})}\\
&=\phi^{-1}{\left(\frac{\cn{\tilde{u}}\cn{\tilde{v}} \mp \sn{\tilde{u}}\sn{\tilde{v}}\dn{\tilde{u}}\dn{\tilde{v}}}
{1-k^2\sn^2{\tilde{u}}\sn^2{\tilde{v}}}\right)}
\end{align*}
where $\tilde{u}:=2\cdot 3^{1/4}u$ and $\tilde{v}:=2\cdot 3^{1/4}v$.
Recall that $\sn^2{x}+\cn^2{x}=1$ and $k^2\sn^2{x}+\dn^2{x}=1$; then the last equality gives
\begin{multline}
\label{eq:addition26}
\sin_{2,6}{(u \pm v)}\\
=\phi^{-1}{\left(\frac{\phi(U)\phi(V) \mp \sqrt{(1-\phi(U)^2)(1-\phi(V)^2)(1-k^2(1-\phi(U)^2))(1-k^2(1-\phi(V)^2))}}
{1-k^2(1-\phi(U)^2)(1-\phi(V)^2)}\right)},
\end{multline}
where $U:=\sin_{2,6}{u}$ and $V:=\sin_{2,6}{v}$.

With $u=v$ and the observation that 
\begin{gather*}
1-\phi(U)^2=\frac{4\sqrt{3}U^2(1-U^2)}{(1+(\sqrt{3}-1)U^2)^2},\\
1-k^2(1-\phi(U)^2)=\frac{1+U^2+U^4}{(1+(\sqrt{3}-1)U^2)^2},
\end{gather*}
this implies that 
\begin{align*}
\sin_{2,6}^{-1}{(2u)}
&=\phi^{-1}{\left(\frac{\phi(U)^2-(1-\phi(U)^2)(1-k^2(1-\phi(U)^2))}{1-k^2(1-\phi(U)^2)^2}\right)}\\
&=\phi^{-1}{\left(\frac{1-4(\sqrt{3}+1)U^2+8U^6+4(\sqrt{3}+1)U^8}{1+4(\sqrt{3}-1)U^2+8U^6-4(\sqrt{3}-1)U^8}\right)}.
\end{align*}
Routine simplification now results in the formula
$$\sin_{2,6}^{-1}{(2u)}
=\frac{2U\sqrt{1-U^6}}{\sqrt{1+8U^6}}
=\frac{2\sin_{2,6}{u}\cos_{2,6}{u}}{\sqrt{1+8\sin_{2,6}^6{u}}},$$
and the proof is complete.
\qed


\section{Proofs of theorems}

To prove Theorem \ref{thm:(p,q)=(6/5,6)}, we use the following multiple-angle formulas. 

\begin{lem}[\cite{T2016b}]
\label{lem:maf}
Let $1<q<\infty$ 
and $q^*:=q/(q-1)$. If $x \in [0,\pi_{2,q}/(2^{2/q})]=[0,\pi_{q^*,q}/2]$, then
\begin{align}
\sin_{2,q}{(2^{2/q}x)}&=2^{2/q}\sin_{q^*,q}{x}\cos_{q^*,q}^{q^*-1}{x}, \label{eq:mafs} \\
\cos_{2,q}{(2^{2/q} x)}&=\cos_{q^*,q}^{q^*}{x}-\sin_{q^*,q}^q{x} \notag \\
&=1-2\sin_{q^*,q}^q{x}=2\cos_{q^*,q}^{q^*}{x}-1. \label{eq:mafc}
\end{align}
\end{lem}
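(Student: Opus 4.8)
The plan is to exploit the fact that both $\sin_{2,q}$ and $\sin_{q^*,q}$ arise as inverses of closely related integrals, and to show that the function on the right-hand side of \eqref{eq:mafs} satisfies the \emph{same} differential equation and initial data that characterize $\sin_{2,q}{(2^{2/q}x)}$. First I would record the basic differential identities. From the definitions one has $\cos_{2,q}{y}=(1-\sin_{2,q}^q{y})^{1/2}$ and $\cos_{q^*,q}{x}=(1-\sin_{q^*,q}^q{x})^{1/p}$ with $p=q^*$, together with the Pythagorean-type relations $|\cos_{2,q}{y}|^2+|\sin_{2,q}{y}|^q=1$ and $|\cos_{q^*,q}{x}|^{q^*}+|\sin_{q^*,q}{x}|^q=1$. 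I would also need the derivative rules $(\sin_{2,q}{y})'=\cos_{2,q}{y}$ and $(d/dx)\cos_{q^*,q}{x}=-\sin_{q^*,q}^{q-1}{x}$, the latter obtained by differentiating $\cos_{q^*,q}^{q^*}{x}=1-\sin_{q^*,q}^q{x}$.

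Next I would define $g(x):=2^{2/q}\sin_{q^*,q}{x}\cos_{q^*,q}^{q^*-1}{x}$ and compute $g'(x)$ by the product rule, using the derivative rules above; the aim is to verify that $g$ solves the first-order autonomous ODE characterizing $2^{2/q}\sin_{2,q}{(2^{2/q}x)}$, namely $y'=2^{2/q}\cos_{2,q}{(2^{2/q}x)}$ rewritten as an equation in $g$ alone. Equivalently, and more cleanly, I would define $h(x):=\cos_{q^*,q}^{q^*}{x}-\sin_{q^*,q}^q{x}$ and verify directly that the pair $(g,h)$ satisfies $g'=2^{2/q}h$ and the algebraic constraint $h^2+g^q=1$, which is exactly the pair of relations obeyed by $(\sin_{2,q}{(2^{2/q}x)},\cos_{2,q}{(2^{2/q}x)})$ after rescaling; since $g(0)=0$ and $g'(0)=2^{2/q}$ match the corresponding initial data, uniqueness for the initial value problem forces $g(x)=\sin_{2,q}{(2^{2/q}x)}$ on the relevant interval, giving \eqref{eq:mafs}. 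The three equivalent expressions in \eqref{eq:mafc} then follow from $h=\cos_{q^*,q}^{q^*}{x}-\sin_{q^*,q}^q{x}$ by substituting $\cos_{q^*,q}^{q^*}{x}=1-\sin_{q^*,q}^q{x}$ and $\sin_{q^*,q}^q{x}=1-\cos_{q^*,q}^{q^*}{x}$.

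The main obstacle I expect is verifying the algebraic constraint $g^q+h^2=1$, since raising $g=2^{2/q}\sin_{q^*,q}{x}\cos_{q^*,q}^{q^*-1}{x}$ to the $q$-th power produces $g^q=4\sin_{q^*,q}^q{x}\cos_{q^*,q}^{q^*}{x}$, and one must check that $4\sin_{q^*,q}^q{x}\cos_{q^*,q}^{q^*}{x}+\left(\cos_{q^*,q}^{q^*}{x}-\sin_{q^*,q}^q{x}\right)^2=1$. Writing $a:=\cos_{q^*,q}^{q^*}{x}$ and $b:=\sin_{q^*,q}^q{x}$, this is the identity $4ab+(a-b)^2=(a+b)^2=1$, which holds precisely because $a+b=1$ by the generalized Pythagorean relation; so the apparent obstacle dissolves into a single algebraic line. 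The endpoint correspondence $[0,\pi_{2,q}/2^{2/q}]=[0,\pi_{q^*,q}/2]$ I would confirm by the substitution $t\mapsto t$ in the defining integrals, matching $2^{2/q}F_{q^*,q}(1)=2F_{2,q}(1)$ through the change of variable relating the two.
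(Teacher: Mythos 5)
The paper gives no proof of this lemma at all --- it is quoted from \cite{T2016b} --- so there is nothing internal to compare against; judged on its own, your verification strategy is sound, and its crux is exactly the algebraic point you isolate: with $a=\cos_{q^*,q}^{q^*}{x}$ and $b=\sin_{q^*,q}^{q}{x}$ one has $g^q=4ab$ (using $(q^*-1)q=q^*$) and $h^2=(a-b)^2$, so $g^q+h^2=(a+b)^2=1$. Together with $g'=2^{2/q}h$ and the initial data this does characterize $\sin_{2,q}(2^{2/q}x)$, and the equivalent forms in \eqref{eq:mafc} follow from $a+b=1$ as you say. This is in substance the same computation as in \cite{T2016b}, where it is organized as a change of variables in the integral $F_{2,q}$.

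Three points need repair before the argument stands. First, the derivative rule you state, $(d/dx)\cos_{q^*,q}{x}=-\sin_{q^*,q}^{q-1}{x}$, is false for $q\neq 2$: implicit differentiation of $\cos_{q^*,q}^{q^*}{x}=1-\sin_{q^*,q}^{q}{x}$ gives $(d/dx)\cos_{q^*,q}{x}=-(q-1)\sin_{q^*,q}^{q-1}{x}\,\cos_{q^*,q}^{2-q^*}{x}$. The clean identity you actually need is $(d/dx)\bigl(\cos_{q^*,q}^{q^*-1}{x}\bigr)=-\sin_{q^*,q}^{q-1}{x}$, which follows from the above because $(q^*-1)(q-1)=1$; with that, the product rule gives $g'=2^{2/q}(\cos_{q^*,q}^{q^*}{x}-\sin_{q^*,q}^{q}{x})=2^{2/q}h$, but with your formula as literally written the computation does not close. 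Second, the uniqueness step is delicate: the scalar equation $y'=2^{2/q}(1-y^q)^{1/2}$ is not Lipschitz at $y=1$, and $h$ changes sign precisely where $g=1$, so you cannot invoke standard ODE uniqueness on all of $[0,\pi_{q^*,q}/2]$. The usual fix is to note that on the set where $h>0$ one has $\frac{d}{dx}F_{2,q}(g(x))=g'(x)(1-g(x)^q)^{-1/2}=2^{2/q}$, hence $F_{2,q}(g(x))=2^{2/q}x$ and $g(x)=\sin_{2,q}(2^{2/q}x)$ there, and then to cross the turning point by continuity and symmetry. Third, the interval identity $\pi_{2,q}/2^{2/q}=\pi_{q^*,q}/2$ is not obtained by ``the substitution $t\mapsto t$''; it either falls out of the completed argument evaluated at the point where $g=1$, or is proved directly from $\pi_{p,q}=\tfrac{2}{q}B(1/q,1/p^*)$ together with the duplication formula $B(1/q,1/q)=2^{1-2/q}B(1/q,1/2)$. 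None of these is fatal, but all three must be corrected for the proof to be complete.
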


\begin{proof}[Proof of Theorem \ref{thm:(p,q)=(6/5,6)}]
Let $x \in [0,\pi_{6/5,6}/4]$.
Applying \eqref{eq:mafs} of Lemma \ref{lem:maf} in case $q=6$
with $x$ replaced by $2x \in [0,\pi_{6/5,6}/2]$, we get
\begin{equation}
\label{eq:cubic}
\sin_{2,6}{(2\cdot2^{1/3}x)}=2^{1/3}\sin_{6/5,6}{(2x)}(1-\sin_{6/5,6}^6{(2x)})^{1/6}.
\end{equation}

First, we consider the case 
$$0 \leq x <\frac{\pi_{6/5,6}}{8}.$$
Then, since $0 \leq 2\sin_{6/5,6}^6{(2x)}<1$ by \cite[Lemma 2.1]{T2016b}, 
the equation \eqref{eq:cubic} gives
$$2\sin_{6/5,6}^6{(2x)}
=1-\sqrt{1-\sin_{2,6}^6{(2\cdot2^{1/3}x)}}.$$
Set $S=S(x):=\sin_{2,6}{(2^{1/3}x)}$. Using the double-angle formula 
\eqref{eq:(p,q)=(2,6)} for $\sin_{2,6}{x}$, we have
\begin{align*}
2\sin_{6/5,6}^6{(2x)}
&=1-\sqrt{1-\left(\frac{2S\sqrt{1-S^6}}{\sqrt{1+8S^6}}\right)^6}\\
&=1-\frac{\sqrt{1-40S^6+384S^{12}+320S^{18}+64S^{24}}}{(1+8S^6)^{3/2}}\\
&=1-\frac{|1-20S^6-8S^{12}|}{(1+8S^6)^{3/2}}.
\end{align*}
Since $0 \leq S^6 <\sin_{2,6}^6{(\pi_{2,6}/4)}=(3\sqrt{3}-5)/4$, 
evaluated by \eqref{eq:(p,q)=(2,6)},
we see that $1-20S^6-8S^{12}>0$. 
Thus, 
\begin{align}
2\sin_{6/5,6}^6{(2x)}
&=1-\frac{1-20S^6-8S^{12}}{(1+8S^6)^{3/2}} \notag \\
&=\frac{(\sqrt{1+8S^6}-1)(\sqrt{1+8S^6}+3)^3}{8(1+8S^6)^{3/2}} \notag \\
&=\frac{S^6(3+\sqrt{1+8S^6})^3}{(1+8S^6)^{3/2}(1+\sqrt{1+8S^6})}.\label{eq:case1}
\end{align}
Therefore, by \eqref{eq:mafs}, 
$$\sin_{6/5,6}{(2x)}
=\frac{2^{1/6}\sin_{6/5,6}{x}\cos_{6/5,6}^{1/5}{x}\left(3+\sqrt{1+32\sin_{6/5,6}^6{x}
\cos_{6/5,6}^{6/5}{x}}\right)^{1/2}}{\left(1+32\sin_{6/5,6}^6{x}\cos_{6/5,6}^{6/5}{x}\right)^{1/4}
\left(1+\sqrt{1+32\sin_{6/5,6}^6{x}\cos_{6/5,6}^{6/5}{x}}\right)^{1/6}}.$$

In the remaining case 
$$\frac{\pi_{6/5,6}}{8} 
\leq x \leq \frac{\pi_{6/5,6}}{4},$$
it follows easily that $1 \leq 2\sin_{6/5,6}^6{(2x)}<2$ and $1-20S^6-8S^{12} \leq 0$,
hence we obtain \eqref{eq:case1} again.
The proof is complete.
\end{proof}

%

To show Theorem \ref{thm:(p,q)=(6/5,2)}, the following lemma is useful.

\begin{lem}[\cite{EGL2012}, \cite{KT2019}]
\label{lem:duality}
Let $1<p,\ q <\infty$. For $x \in [0,2]$,
\begin{gather*}
q\pi_{p,q}=p^*\pi_{q^*,p^*},\\
\sin_{p,q}{\left(\frac{\pi_{p,q}}{2}x\right)}
=\cos_{q^*,p^*}^{q^*-1}{\left(\frac{\pi_{q^*,p^*}}{2}(1-x)\right)}.
\end{gather*}
\end{lem}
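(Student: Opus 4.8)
The plan is to prove the two displayed identities separately, establishing the constant relation $q\pi_{p,q}=p^*\pi_{q^*,p^*}$ first because it is needed to normalize the functional identity. Write $p^*=p/(p-1)$, and note the elementary conjugacy relations $1/p+1/p^*=1$, $(p^*)^*=p$, $(q^*)^*=q$, $p^*/p=p^*-1$, $(q-1)/q=1/q^*$, and $(q^*-1)/q^*=1/q$; these are exactly what make the various exponents collapse at the right moments.

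For the constant, I would substitute $s=t^q$ in $\pi_{p,q}=2\int_0^1(1-t^q)^{-1/p}\,dt$ to get $\pi_{p,q}=\tfrac{2}{q}B\!\left(\tfrac1q,\tfrac1{p^*}\right)$, where $B$ denotes the Euler Beta function. Performing the analogous substitution $s=t^{p^*}$ in $\pi_{q^*,p^*}$ and using $(q^*)^*=q$ gives $\pi_{q^*,p^*}=\tfrac{2}{p^*}B\!\left(\tfrac1{p^*},\tfrac1q\right)$. Multiplying the first by $q$ and the second by $p^*$ and invoking the symmetry $B(a,b)=B(b,a)$ yields $q\pi_{p,q}=2B\!\left(\tfrac1q,\tfrac1{p^*}\right)=p^*\pi_{q^*,p^*}$.

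For the functional identity I would first restrict to $x\in[0,1]$ and set $u:=\sin_{p,q}\!\left(\tfrac{\pi_{p,q}}2 x\right)\in[0,1]$ and $v:=(1-u^q)^{1/p^*}\in[0,1]$, so that $u^q+v^{p^*}=1$ by construction. The claim then splits into two parts. The easy part is that $v$ reproduces $u$: since $1-v^{p^*}=u^q$, the Pythagorean identity $|\cos_{q^*,p^*}|^{q^*}+|\sin_{q^*,p^*}|^{p^*}=1$ together with $(q^*-1)/q^*=1/q$ gives, once the second part is known, $\cos_{q^*,p^*}^{q^*-1}\!\left(\tfrac{\pi_{q^*,p^*}}2(1-x)\right)=(1-v^{p^*})^{1/q}=u$, which is precisely the assertion. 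The substantive part is to show $v=\sin_{q^*,p^*}\!\left(\tfrac{\pi_{q^*,p^*}}2(1-x)\right)$, i.e. $\int_0^v(1-\sigma^{p^*})^{-1/q^*}\,d\sigma=\tfrac{\pi_{q^*,p^*}}2(1-x)$. Using $\tfrac{\pi_{p,q}}2 x=\int_0^u(1-t^q)^{-1/p}\,dt$ and the constant relation to convert $\pi_{p,q}$ into $\pi_{q^*,p^*}$, this reduces to the single substitution identity
\[
\int_u^1(1-t^q)^{-1/p}\,dt=\frac{p^*}{q}\int_0^v(1-\sigma^{p^*})^{-1/q^*}\,d\sigma,
\]
which I would prove by the change of variable $\sigma=(1-t^q)^{1/p^*}$. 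Under it $\sigma=v$ at $t=u$ and $\sigma=0$ at $t=1$, while $(1-t^q)^{-1/p}=\sigma^{-(p^*-1)}$ and $dt=-\tfrac{p^*}{q}\sigma^{p^*-1}(1-\sigma^{p^*})^{-1/q^*}\,d\sigma$, so the power $\sigma^{-(p^*-1)}\sigma^{p^*-1}$ cancels and the right-hand side drops out.

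Finally I would pass from $[0,1]$ to $[0,2]$ using the symmetries already built into the construction: $\sin_{p,q}$ is symmetric about $\pi_{p,q}/2$ (by its very definition on $(\pi_{p,q}/2,\pi_{p,q}]$) and $\cos_{q^*,p^*}$ is even. For $x\in[1,2]$, applying the $[0,1]$ identity at $2-x$ and invoking these two symmetries returns the stated identity at $x$. I expect the only genuinely delicate point to be the substantive part of the third step: recognizing that the functional identity is nothing but the Pythagorean relation $u^q+v^{p^*}=1$ in disguise, and choosing the change of variable $\sigma=(1-t^q)^{1/p^*}$ so that every exponent collapses through the conjugacy relations. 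The Beta-function computation and the reflection argument are then routine.
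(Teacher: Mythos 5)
Your proof is correct. Note, however, that the paper itself gives no proof of this lemma: it is quoted verbatim from the cited references \cite{EGL2012} and \cite{KT2019}, so there is no in-paper argument to compare against. Your argument is essentially the standard one found in that literature: the Beta-function computation $\pi_{p,q}=\tfrac{2}{q}B\bigl(\tfrac1q,\tfrac1{p^*}\bigr)$ for the constant, the substitution $\sigma=(1-t^q)^{1/p^*}$ (where all exponents collapse via $p^*/p=p^*-1$ and $1/q-1=-1/q^*$) for the functional identity on $[0,1]$, and the built-in symmetries (reflection of $\sin_{p,q}$ about $\pi_{p,q}/2$, evenness of $\cos_{q^*,p^*}$) to extend to $[0,2]$. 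All steps check out, including the sign issue when extracting $\cos_{q^*,p^*}$ from the Pythagorean identity, which is harmless because the argument $\tfrac{\pi_{q^*,p^*}}{2}(1-x)$ stays in $[-\pi_{q^*,p^*}/2,\pi_{q^*,p^*}/2]$, where the generalized cosine is nonnegative; you could state this explicitly, but it is implicit in your setup.
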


\begin{proof}[Proof of Theorem \ref{thm:(p,q)=(6/5,2)}]
Let $x \in [0,\pi_{6/5,2}/2]$. Then,
since $4x/\pi_{6/5,2} \in [0,2]$, it follows from Lemma \ref{lem:duality} that
$$\sin_{6/5,2}{(2x)}
=\cos_{2,6}{\left(\frac{\pi_{2,6}}{2}\left(1-\frac{4x}{\pi_{6/5,2}}\right)\right)}
=\cos_{2,6}{\left(\frac{\pi_{2,6}}{2}-\frac{2x}{3}\right)}.$$
Thus,
\begin{equation}
\label{eq:sin2x}
\sin_{6/5,2}{2x}=\sqrt{1-\sin_{2,6}^6{\left(\frac{\pi_{2,6}}{2}-\frac{2x}{3}\right)}}.
\end{equation}
Function $\sin_{2,6}$ has the addition formula \eqref{eq:addition26}.
Letting $u=\pi_{2,6}/2$ and $v=2x/3$, we have
\begin{equation}
\label{eq:atlem}
\sin_{2,6}{\left(\frac{\pi_{2,6}}{2}-\frac{2x}{3}\right)}
=\phi^{-1}{\left(-\phi(V)\right)}=\sqrt{\frac{1-V^2}{1+2V^2}},
\end{equation}
where $V:=\sin_{2,6}{(2x/3)}$.
Applying \eqref{eq:atlem} to the right-hand side of \eqref{eq:sin2x}, we obtain
$$\sin_{6/5,2}{2x}=\sqrt{1-\left(\frac{1-\sin_{2,6}^2{(2x/3)}}{1+2\sin_{2,6}^2{(2x/3)}}\right)^3}.$$

Let $f(x):=\sin_{6/5,2}{x}$ and $g(x):=\sin_{2,6}{(2x/3)}$. Then, 
\begin{equation}
\label{eq:f(2x)}
f(2x)=\sqrt{1-\left(\frac{1-g(x)^2}{1+2g(x)^2}\right)^3}.
\end{equation}
Therefore, it is easy to see that 
\begin{align}
\label{eq:+}
g(x)=\sqrt{\frac{1-(1-f(2x)^2)^{1/3}}{1+2(1-f(2x)^2)^{1/3}}}.
\end{align}

On the other hand, by \eqref{eq:(p,q)=(2,6)} with $x$ replaced with $x/2$, 
we see that $g(x)$ satisfies 
\begin{equation*}
g(x)=\frac{2g(x/2)\sqrt{1-g(x/2)^6}}{\sqrt{1+8g(x/2)^6}}.
\end{equation*}
Applying \eqref{eq:+} with $x$ replaced with $x/2$ to the right-hand side, we obtain
\begin{equation}
\label{eq:g(x)}
g(x)=\frac{2f(x)(1-f(x)^2)^{1/6}}{\sqrt{9-8f(x)^2}}.
\end{equation}

Subtituting \eqref{eq:g(x)} into \eqref{eq:f(2x)},
we can express $f(2x)$ in terms of $f(x)$, i.e.,
$$f(2x)=\sqrt{1-\left(\frac{9-8f(x)^2-4f(x)^2(1-f(x)^2)^{1/3}}{9-8f(x)^2+8f(x)^2(1-f(x)^2)^{1/3}}\right)^3}.$$
Since $1-f(x)^2=\cos_{6/5,2}^{6/5}{x}$, the proof is complete.
\end{proof}




\begin{thebibliography}{99}


%



\bibitem{Be1994} B.C. Berndt,
\textit{Ramanujan's notebooks. Part IV},
Springer-Verlag, New York, 1994.

%
%
 

%





%

%

%

%
%


%
%





\bibitem{CS2005} D.A. Cox and J. Shurman,
Geometry and number theory on clovers,
\textit{Amer. Math. Monthly} \textbf{112} (2005), no. 8, 682--704. 








\bibitem{EGL2012} D.E. Edmunds, P. Gurka and J. Lang,
Properties of generalized trigonometric functions, 
\textit{J. Approx. Theory} \textbf{164} (2012), no. 1, 47--56. 
%

%
%
%

%
%










\bibitem{KT2019} H. Kobayashi and S. Takeuchi,
Applications of generalized trigonometric functions
with two parameters,
\textit{Commun. Pure Appl. Anal.} \textbf{18} no. 3 (2019), 1509--1521.


%


%





 







\bibitem{ST2020} S. Sato and S. Takeuchi,
Two double-angle formulas of generalized trigonometric functions,
\textit{J. Approx. Theory} \textbf{250} (2020), 105322.


\bibitem{Sh2017} K. Shinohara,
Addition formulas of leaf functions according to integral root of polynomial based
on analogies of inverse trigonometric functions and inverse lemniscate functions,
\textit{Applied Mathematical Sciences} \textbf{11} (2017), no. 52, 2561--2577.



%

%
%

\bibitem{T2016b} S. Takeuchi,
Multiple-angle formulas of generalized trigonometric functions with two parameters,
\textit{J. Math. Anal. Appl.} \textbf{444} no. 2 (2016), 1000--1014.

%





%
%
\bibitem{WW1927} E.T. Whittaker and G.N. Watson,
\textit{A course of modern analysis}, 
An introduction to the general theory of infinite processes 
and of analytic functions; with an account of the principal
transcendental functions. Reprint of the fourth (1927) edition. 
Cambridge Mathematical Library. Cambridge University Press, 
Cambridge, 1996.


%
\end{thebibliography}
\end{document}